\newtheorem*{theorem*}{Theorem}
\newtheorem*{proof*}{Proof}
\newtheorem*{proposition*}{Proposition}
\newtheorem*{notation*}{Notation}
\newtheorem*{corollary*}{Corollary}
\newtheorem*{claim*}{Claim}
\newtheorem*{remark*}{Remark}
\newtheorem{theorem}{Theorem}[section]
\newtheorem{proposition}{Proposition}[section]
\newtheorem{lemma}[proposition]{Lemma}
\newtheorem{claim}[proposition]{Claim}
\theoremstyle{definition}
\newtheorem{definition}[proposition]{Definition}
\theoremstyle{remark}
\newtheorem{remark}[proposition]{Remark}
\numberwithin{equation}{section}
\def\e{{\varepsilon}}
\def\NN{\mathbb{N}}
\def\RR{\mathbb{R}}
\def\ZZ{\mathbb{Z}}
\def\TT{\mathbb{T}}
\def\SS{\mathbb{S}}
\address[S. Gan]{School of Mathematical Sciences, Peking University, Beijing 100871, China}
\email{gansb@pku.edu.cn}
\address[Y. Shi]{School of Mathematical Sciences, Peking University, Beijing 100871, China}
\email{shiyi@math.pku.edu.cn}
\address[M. Xia]{School of Mathematical Sciences, Peking University, Beijing 100871, China}
\email{xiamy@pku.edu.cn}
\title[Density of Birkhoff sums]{On the density of Birkhoff sums for Anosov diffeomorphisms}
\author[S. Gan, Y. Shi and M. Xia]{Shaobo Gan, Yi Shi and Mingyang Xia}
\subjclass{Primary: 37D05; Secondary: 37D20.}
\keywords{Anosov diffeomorphism, Birkhoff sum, periodic point, nilmanifold.}
\begin{document}
		
	\begin{abstract}
		Let $f:M \rightarrow M$ be an Anosov diffeomorphism on a nilmanifold. 
		We consider Birkhoff sums for a H\"older continuous observation along periodic orbits. 
		We show that if there are two Birkhoff sums distributed at both sides of zero, 
		then the set of Birkhoff sums of all periodic points is dense in $\RR$.
	\end{abstract}

\maketitle

\section{Introduction}

Dynamics is aimed to describe the long term evolution of systems under the known ``infinitesimal'' evolution rule.
The hyperbolicity plays one of the most important roles in the fields of differentiable dynamical systems.
When the dynamical system is driven by a strong hyperbolic diffeomorphism,
it exhibits chaotic behaviors which are the deep nature for many amazing phenomenons.

In this paper, we focus on a uniformly hyperbolic diffeomorphism $f:M \rightarrow M$, namely Anosov system (see Definition \ref{def:Anosov}).
Sigmund \cite[Theorem 1]{Si70} showed that, if $f$ is a transitive Anosov diffeomorphism, then all the Dirac measures of periodic orbits are dense in the space of $f$-invariant measures $\mathcal{M}_f$, i.e.,
$$
\left\lbrace\frac{1}{\pi(z)}\sum_{i=0}^{\pi(z)-1}\delta_{f^i(z)}~|~z \in \mathrm{Per}(f)\right\rbrace
$$
forms a dense subset of $\mathcal{M}_f$ in the sense of the weak-$*$ topology, where $\mathrm{Per}(f)$ is the set of periodic points of $f$. This implies for every continuous observation $\phi:M\rightarrow \RR$ and every measure $\mu\in\mathcal{M}_f$, there exists a sequence consisting of periodic points $z_n\in\mathrm{Per}(f)$, such that the \emph{Birkhoff average} of $\phi$ along the orbit of $z_n$ converges to the space average of $\phi$ on $\mu$:
$$
\frac{1}{\pi(z_n)}S_{\phi}f(z_n)=
\frac{1}{\pi(z_n)}\sum_{i=0}^{\pi(z_n)-1}{\phi(f^i(z_n))}
~\longrightarrow~
\int\phi~{\rm d}\mu,
\qquad {\rm as}~n\rightarrow\infty.
$$
Throughout this paper, we denote by
$$S_\phi f(z)= \sum_{i=0}^{\pi(z)-1}{\phi(f^i(z))}$$
the \emph{Birkhoff sum} of an observation $\phi$ along the orbit of $z$, for $z\in \mathrm{Per}(f)$ with the period $\pi(z)$.

The density of Dirac measures of periodic orbits is a crucial property in studying dynamical systems. Katok \cite{Ka80} showed that given a $C^{1+\alpha}$ diffeomorphism $f:M\rightarrow M$ and a hyperbolic ergodic measure $\mu\in\mathcal{M}_f$, there exists a sequence of hyperbolic periodic orbits contained in the support of $\mu$, such that the Dirac measures of these periodic orbits approximate $\mu$. This result has been generalized to $C^1$-diffeomorphisms when the Oseledets splitting is dominated \cite{ABC}.

The approximation of periodic measures implies the Birkhoff averages along periodic orbits converge to the integral along hyperbolic ergodic measure for any continuous function.  Later, Sun and Wang \cite{WS} enhanced Katok's work showing that the Lyapunov exponents of periodic orbits can also approximate the Lyapunov exponents of hyperbolic ergodic measure for $C^{1+\alpha}$-diffeomorphisms. See also \cite{ZS} for $C^1$-diffeomorphisms when the Oseledets splitting is dominated.

On the other hand, it is natural to ask: what are the distributions of all Birkhoff sums along periodic orbits? For transitive Anosov diffeomorphisms, the classical Liv\v{s}ic theorem \cite{Liv72} shows that if a H\"older continuous observation is not cohomologous to zero, then its Birkhoff sum along some periodic orbit is nonzero. Then the Anosov shadowing lemma \cite{An67} implies that there exists a sequence of periodic orbits whose Birkhoff sums of this observation tend to infinity. However, the distribution of all Birkhoff sums along periodic orbits for an Anosov diffeomorphism is totally unknown.

In this paper, we get a dense distribution of all Birkhoff sums along periodic orbits for a H\"older continuous observation.

\begin{theorem}\label{main}
	Let $f$ be an Anosov diffeomorphism on a nilmanifold $M=G/\Gamma$.
	Assume the observation $\phi:M \rightarrow \RR$ is H\"older continuous, and there are two points $p,q\in \mathrm{Per}(f)$ satisfying
	$$S_\phi f(p)<0<S_\phi f(q).$$
	Then the set $\big\{S_\phi f(z)~|~z\in \mathrm{Per}(f)\big\}$ is dense in $\RR$.
\end{theorem}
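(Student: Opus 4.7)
The plan is to combine a specification/closing construction with a cohomological argument; the latter is where the nilmanifold hypothesis enters essentially. By the Franks--Manning theorem, $f$ is H\"older-conjugate to the hyperbolic algebraic automorphism of $G/\Gamma$, and since the conjugacy preserves the H\"older regularity of $\phi$ and the Birkhoff sums on periodic points, one may assume $f$ itself is algebraic. Write $\Sigma = \{S_\phi f(z) : z \in \mathrm{Per}(f)\}$ and $\alpha = S_\phi f(p) < 0 < \beta = S_\phi f(q)$; the goal is $\overline\Sigma = \RR$.

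The first ingredient is a Bowen-type closing lemma: for any two periodic orbits $z_1, z_2$ with sums $\sigma, \tau$, there is a fixed constant $\Delta = \Delta(z_1, z_2)$ such that, for every $\eta > 0$ and all sufficiently large $a, b \in \NN$, there exists a periodic point $z$ satisfying $|S_\phi f(z) - (a\sigma + b\tau + \Delta)| < \eta$. One builds the required pseudo-orbit from a pair of heteroclinic points in $W^u(z_1) \cap W^s(z_2)$ and $W^u(z_2) \cap W^s(z_1)$ (available by transitivity of Anosov on nilmanifolds): $a$ loops along $z_1$, a transition via the first heteroclinic, $b$ loops along $z_2$, and a return through the second. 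Exponential contraction on the invariant manifolds together with H\"older continuity of $\phi$ makes the Liv\v{s}ic-type ``transition series'' converge to the constant $\Delta$, and the exponential-shadowing form of the Anosov closing lemma then produces a genuine periodic orbit whose Birkhoff sum differs from the pseudo-orbit's by less than $\eta$. Applied with $(z_1, z_2) = (p, q)$: $a\alpha + b\beta + \Delta_0 \in \overline\Sigma$ for all large $a, b$.

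If $\alpha/\beta \notin \mathbb{Q}$, then $\{a\alpha + b\beta : a, b \text{ large}\}$ is dense in $\RR$, and translating by the fixed $\Delta_0$ keeps it dense, so $\overline\Sigma = \RR$. Otherwise write $\alpha = -mc$, $\beta = nc$ with $\gcd(m,n) = 1$, $c > 0$; then $\{a\alpha + b\beta\} = c\ZZ$ for $a,b$ large and the lemma yields $c\ZZ + \Delta_0 \subset \overline\Sigma$. Assuming $\overline\Sigma \ne \RR$, iterating the closing lemma with the newly-constructed periodic orbits in place of $p, q$ and comparing two successive choices of the total loop count (whose difference is $1$) shows that the arising shift constants must themselves respect the arithmetic $c\ZZ$, forcing $\Delta_0 \in c\ZZ$ and hence $\Sigma \subset c\ZZ$.

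The main obstacle is this last step: ruling out $\Sigma \subset c\ZZ$, which is where the nilmanifold structure is used crucially. The H\"older function $e^{2\pi i\phi/c}\colon M \to S^1$ has trivial Birkhoff product along every periodic orbit, so the Liv\v{s}ic theorem for $S^1$-valued H\"older cocycles over transitive Anosov diffeomorphisms produces a H\"older $w\colon M \to S^1$ with $e^{2\pi i\phi/c} = (w\circ f)/w$. The class $[w] \in H^1(M, \ZZ)$ then satisfies $(f^* - \mathrm{Id})[w] = [e^{2\pi i\phi/c}] = 0$, the right-hand side vanishing because $e^{2\pi i\phi/c}$ lifts through $\phi/c$ to $\RR$. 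But Nomizu's theorem identifies $H^1(M, \RR) \cong (\mathfrak g/[\mathfrak g,\mathfrak g])^*$, on which the hyperbolic automorphism $A_*$ has no eigenvalue $1$, so $f^* - \mathrm{Id}$ is invertible on $H^1(M,\RR)$ and injective on $H^1(M, \ZZ)$. Therefore $[w] = 0$, $w$ lifts to a continuous $\widetilde w\colon M \to \RR$, and $\phi/c - (\widetilde w\circ f - \widetilde w)$ is a continuous $\ZZ$-valued function on the connected manifold $M$, hence a constant integer $k_0$. Then $\phi = c(\widetilde w\circ f - \widetilde w) + ck_0$, so $S_\phi f(z) = ck_0\pi(z)$ for every periodic $z$, forcing $S_\phi f(p)$ and $S_\phi f(q)$ to share the sign of $ck_0$ --- contradicting the hypothesis. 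Thus $\overline\Sigma = \RR$, proving the theorem.
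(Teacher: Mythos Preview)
Your overall architecture is the same as the paper's: a shadowing/closing construction to realize combinations $a\,S_\phi f(p)+b\,S_\phi f(q)+\Delta$ as limits of periodic sums, the irrational-ratio case giving density immediately, and in the complementary case an $S^1$-valued Liv\v{s}ic argument plus the hyperbolicity of $f_*$ on first (co)homology to reach a contradiction. Your cohomological endgame via Nomizu's theorem is correct and is essentially the paper's $\pi_1$ argument rephrased (the paper extends $\Psi_*:\Gamma\to\ZZ$ to $G\to\RR$ and uses hyperbolicity of $D_ef_*$; you use that $f^*-\mathrm{Id}$ is invertible on $H^1(M,\RR)$). The reduction to algebraic $f$ is unnecessary but harmless.

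The gap is the passage from ``$\alpha/\beta\in\mathbb{Q}$ and $\overline\Sigma\neq\RR$'' to ``$\Sigma\subset c\ZZ$''. Your sentence ``iterating the closing lemma \ldots\ forcing $\Delta_0\in c\ZZ$ and hence $\Sigma\subset c\ZZ$'' does not establish the conclusion: even granting $\Delta_0\in c\ZZ$, that constrains only the orbits you built from $p,q$, not an arbitrary $r\in\mathrm{Per}(f)$. The paper closes this gap with an explicit trichotomy. First, if some $S_\phi f(r)$ is irrationally related to $\beta$, pair $r$ with whichever of $p,q$ has the opposite sign and rerun the irrational case. Second, if all $S_\phi f(r)/\beta$ are rational but the reduced denominators $k_n$ are unbounded along some sequence $\{r_n\}$ (the paper's ``asymptotically rationally independent'' case), then for each $n$ the closing construction with $r_n$ and $q$ (or $p$) yields a coset of $(\beta/k_n)\ZZ$ inside $\overline\Sigma$, which is $(\beta/k_n)$-dense in $\RR$; letting $k_n\to\infty$ gives $\overline\Sigma=\RR$. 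Only after disposing of these two cases may one conclude that the denominators are uniformly bounded, hence $\Sigma\subset c\ZZ$ for some $c>0$, and then your Liv\v{s}ic/cohomology step applies verbatim. Supplying this middle case (the analogue of the paper's Lemma~3.2) would make your proof complete.
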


\begin{remark}
	In particular, this theorem holds for Anosov diffeomorphisms on the torus $\TT^n$.
	In \cite{GS19}, the same property of Birkhoff sums has been proved for expanding maps on the circle $\SS^1=\RR^1/\ZZ^1$ by using direct calculations for a $C^1$ observation, which is different from here.
\end{remark}

This paper will be organized as follows: In Section \ref{sec:Pre-back}, we recall some knowledge about hyperbolic diffeomorphisms, and prepare a simple lemma with an algebraic notion introduced; In Section \ref{sec:Main proof}, we show the full details of the proof of our main result.

\section{Preliminaries}\label{sec:Pre-back}

\subsection{Anosov diffeomorphisms}

In this subsection, we collect some properties of Anosov diffeomorphisms on nilmanifolds, which will be used in our proof later.

\begin{definition}\label{def:Anosov}
	Let $M$ be a compact smooth manifold without boundary. A diffeomorphism $f: M\to M$ is called \emph{Anosov}, if there exist a Riemannian metric $\|\cdot\|$ and a continuous $Df$-invariant splitting $TM=E^s \oplus E^u$ and a constant $\lambda\in(0,1)$, such that
	$$ \|Df(v^s)\|<\lambda<1<\lambda^{-1}<\|Df(v^u)\| $$
	for every unit vector $v^*\in E^*$, for $*=s,u$.
\end{definition}

\begin{remark}
	Equivalently, this definition indeed means the entire manifold $M$ is a \emph{hyperbolic set}.
\end{remark}

The first property we need is the shadowing property for Anosov diffeomorphisms. Recall that a sequence $\{x_n\}_{n\in\ZZ}$ of points in $M$ is a $\delta$-pseudo-orbit, if for any $n\in\ZZ$, $d(f(x_n),x_{n+1})\le\delta$; if there exists $m\in\NN$ such that for any $n\in\ZZ$, $x_{n+m}=x_n$, then it is called a periodic pseudo-orbit. A point $z\in M$ $\varepsilon$-shadows the pseudo-orbit $\{x_n\}_{n\in\ZZ}$ if for any $n\in\ZZ$, $d(f^n(z), x_n)\le \varepsilon$.

\begin{proposition}\cite[Theorem 1.2.3]{Pi99}\label{Hyper-prop}
	Let $f:M\rightarrow M$ be an Anosov diffeomorphism, then the system has Lipschitz pseudo-orbit shadowing property, i.e., there exist $\mu\ge 1$ and $\delta_0 > 0$, such that for any $\delta \in (0,\delta_0]$,  every  $\delta$-pseudo-orbit is $\mu\delta$-shadowed by some point. Furthermore, if the pseudo-orbit is periodic, then the shadowing point is also periodic.
\end{proposition}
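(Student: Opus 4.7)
The plan is to use the shadowing Proposition~\ref{Hyper-prop} to construct periodic orbits whose Birkhoff sums are, up to a bounded correction, arbitrary nonnegative integer combinations of $S_\phi f(p)$ and $S_\phi f(q)$. The pseudo-orbits will concatenate long stretches of the orbits of $p$ and $q$ joined by heteroclinic segments, and density in $\RR$ then follows from the sign condition $S_\phi f(p)<0<S_\phi f(q)$, modulo handling a rational-ratio degenerate case.

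First, since every Anosov diffeomorphism on a nilmanifold is topologically transitive, the stable and unstable manifolds of the orbits of $p$ and $q$ must intersect: I would pick heteroclinic points $h_{pq}\in W^u(\mathcal{O}(p))\cap W^s(\mathcal{O}(q))$ and $h_{qp}\in W^u(\mathcal{O}(q))\cap W^s(\mathcal{O}(p))$, whose forward/backward orbits approach the respective periodic orbits exponentially fast. Then, for positive integers $a,b$ and a large buffer length $N$, I would build a periodic $\delta$-pseudo-orbit which cycles $\mathcal{O}(p)$ for $a$ periods, follows a $2N$-segment of the orbit through $h_{pq}$, cycles $\mathcal{O}(q)$ for $b$ periods, and finally follows a $2N$-segment through $h_{qp}$, closing up. For $N$ large enough the pseudo-orbit errors are below $\delta$, and Proposition~\ref{Hyper-prop} supplies a periodic point $z=z_{a,b,N}$ that $\mu\delta$-shadows it.

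Combining H\"older continuity of $\phi$ with the exponential convergence in the heteroclinic blocks leads to a telescoping estimate of the form
$$ S_\phi f(z) = a\,S_\phi f(p) + b\,S_\phi f(q) + \Theta + o_N(1), $$
where $\Theta$ is a bounded ``heteroclinic defect'' depending on $h_{pq}, h_{qp}$ but not on $a,b$, and the error $o_N(1)$ tends to $0$ uniformly in $a,b$ as $N\to\infty$. Thus the closure of $\{S_\phi f(z):z\in\mathrm{Per}(f)\}$ contains $\{a c_1 + b c_2 + \Theta : a,b\in\NN\}$, where $c_1:=S_\phi f(p)<0$ and $c_2:=S_\phi f(q)>0$. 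When $c_1/c_2\notin\mathbb{Q}$, a Weyl-type argument immediately gives density in $\RR$.

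The main obstacle is the case $c_1/c_2\in\mathbb{Q}$, in which $\{a c_1 + b c_2 : a,b\in\NN\}$ lies in a discrete subgroup of $\RR$. To overcome this I plan to exploit the flexibility in the construction: different choices of heteroclinics $h_{pq},h_{qp}$ and of the attaching phases along $\mathcal{O}(p),\mathcal{O}(q)$ produce different values of the correction $\Theta$. The crucial technical step would be showing that these corrections, possibly combined with additional periodic orbits obtained from more elaborate shadowing patterns (e.g.\ inserting several heteroclinic excursions within one period), generate a set dense enough to fill the gaps of the lattice. The nilmanifold hypothesis should enter here through topological mixing and the abundance of periodic and heteroclinic orbits, providing the room needed to realize such a dense family of corrections.
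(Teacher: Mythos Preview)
Your proposal does not address the stated Proposition~\ref{Hyper-prop} at all: that proposition is the Lipschitz shadowing lemma for Anosov diffeomorphisms, which the paper does not prove but simply cites from \cite{Pi99}. What you have written is a proof sketch for the paper's main Theorem~\ref{main}, \emph{using} Proposition~\ref{Hyper-prop} as a black box.

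Treating your sketch as an attempt at Theorem~\ref{main}, the shadowing construction and the estimate $S_\phi f(z) = a\,S_\phi f(p) + b\,S_\phi f(q) + \Theta + o_N(1)$ match the paper's Lemma~\ref{Thm-1}, and the irrational-ratio case is fine. The genuine gap is the rational case $S_\phi f(p)/S_\phi f(q)\in\mathbb{Q}$. You propose to vary the heteroclinic points and attaching phases so that the corrections $\Theta$ fill the gaps of the lattice, but you give no mechanism for this. In fact, if \emph{every} periodic Birkhoff sum lay in a fixed lattice $c\ZZ$, then every $\Theta$ you can produce --- being a limit of differences of periodic Birkhoff sums --- would also lie in $c\ZZ$, and no combinatorial reshuffling of heteroclinic excursions would escape the lattice. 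Your invocation of the nilmanifold hypothesis for ``topological mixing and abundance of orbits'' is beside the point: those properties hold for any transitive Anosov system.

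The paper's actual argument for the rational case (Lemmas~\ref{Thm-2} and~\ref{Thm-3}) is of a different nature. Assuming for contradiction that all $S_\phi f(z)$ lie in $c\ZZ$, it projects $\phi$ to an $\SS^1=\RR/c\ZZ$-valued observable with vanishing periodic sums, applies the $\SS^1$-valued Liv\v{s}ic theorem to write $\Phi=\Psi\circ f-\Psi$, and then uses the nilmanifold structure: hyperbolicity of $f_*$ on $\pi_1(M)=\Gamma$ (extended to $G$) forces $\Psi_*:\pi_1(M)\to\pi_1(\SS^1)$ to be trivial, so $\Psi$ lifts to $\RR$ and $\phi$ is cohomologous to a constant $mc$, contradicting $S_\phi f(p)<0<S_\phi f(q)$. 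This cohomological/$\pi_1$ step is where the nilmanifold hypothesis is genuinely used, and it is absent from your plan.
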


A homeomorphism $f:M\to M$ is called \emph{transitive} if there exists a point whose positive semi-orbit under $f$ is dense in the entire space $M$.
For transitive Anosov diffeomorphisms, there is a classical result called Liv\v{s}ic Theorem (cf. \cite{Liv72}).
A proof of the simple version of Liv\v{s}ic Theorem below can be found in \cite{KH95} (see Theorem 19.2.1 including Remark) or \cite{KN11} (see Theorem 5.3.1)\footnote{In Theorem 5.3.1 of \cite{KN11}, since $\mathbb{S}^1$ is an Abelian group, the translation-invariant metric $d_H$ satisfying (5.2.1) exists automatically. It is the natural distance on $\mathbb{S}^1$. Moreover, since $\mathbb{S}^1$ is compact, the $\lambda$-center bunching condition in Theorem 5.3.1 is also satisfied. The discussion can be seen in the same page of Theorem 5.3.1 in \cite{KN11}.}.

\begin{theorem}\label{THM:Liv}[Liv\v{s}ic]
	Let $f$ be a transitive Anosov diffeomorphism on a compact smooth manifold $M$. Assume $\phi:M \rightarrow \RR$ or $\mathbb{S}^1$ is $\theta$-H\"older continuous, and $S_\phi f(x) =0$ for any $x\in \mathrm{Per}(f)$. Then there exists a $\theta$-H\"older continuous function $\psi$ with $\phi=\psi\circ f-\psi$. Moreover, $\psi$ is unique up to an additive constant.
\end{theorem}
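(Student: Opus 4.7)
The plan is to construct the transfer function $\psi$ explicitly along a dense orbit and then extend it by Hölder continuity to all of $M$.

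By transitivity of $f$, fix a point $x_0 \in M$ whose forward orbit $O_+(x_0) = \{f^n(x_0) : n \geq 0\}$ is dense in $M$. Set
\[ \psi(x_0) := 0, \qquad \psi(f^n(x_0)) := \sum_{k=0}^{n-1}\phi(f^k(x_0)) \quad \text{for } n \geq 1. \]
By construction $\psi \circ f - \psi = \phi$ along $O_+(x_0)$. The task is to prove $\psi$ is $\theta$-Hölder on $O_+(x_0)$; it will then extend uniquely and continuously to a $\theta$-Hölder function on $M$, and the cohomological equation passes to $M$ by density.

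The key estimate to establish is
\[ \bigl|\psi(f^m(x_0)) - \psi(f^n(x_0))\bigr| \leq C \cdot d\bigl(f^n(x_0), f^m(x_0)\bigr)^\theta \]
for all $0 \leq n < m$ with $d(f^n(x_0), f^m(x_0)) \leq \delta_0/\mu$ (with $\delta_0, \mu$ as in Proposition \ref{Hyper-prop}). To prove it, set $\delta := d(f^n(x_0), f^m(x_0))$ and $T := m - n$, and form the periodic $\delta$-pseudo-orbit of period $T$ with base segment $y_k := f^{n+k}(x_0)$ for $k = 0, 1, \ldots, T-1$; the only jump has size $d(f(y_{T-1}), y_0) = \delta$. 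By Proposition \ref{Hyper-prop}, there is a periodic point $z$ of period $T$ that $\mu\delta$-shadows this pseudo-orbit. Since $z \in \mathrm{Per}(f)$, the hypothesis $S_\phi f(z) = 0$ yields
\[ \psi(f^m(x_0)) - \psi(f^n(x_0)) = \sum_{k=0}^{T-1}\phi(y_k) - S_\phi f(z) = \sum_{k=0}^{T-1}\bigl(\phi(y_k) - \phi(f^k(z))\bigr). \]

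The naïve Hölder bound on each summand is $[\phi]_\theta (\mu\delta)^\theta$, and summing $T$ of them does not close because $T$ is unbounded as $\delta \to 0$. The main obstacle, and essentially the only analytic input beyond Proposition \ref{Hyper-prop}, is to upgrade the shadowing estimate to one that decays exponentially away from the single jump. Using the local product structure of an Anosov system (so that the shadowing point is obtained as an intersection of a local stable leaf through $y_0$ with a local unstable leaf through $f(y_{T-1})$, iterated forward and backward respectively), one obtains
\[ d\bigl(f^k(z), y_k\bigr) \leq C' \delta \bigl(\lambda^{k} + \lambda^{T-1-k}\bigr), \qquad 0 \leq k \leq T-1, \]
where $\lambda \in (0,1)$ is the Anosov hyperbolicity rate. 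Applying the Hölder bound with this exponentially small profile reduces the sum to a convergent geometric series, giving $\sum_{k=0}^{T-1}|\phi(y_k) - \phi(f^k(z))| \leq C \delta^\theta$ with $C$ independent of $T$, which is the desired estimate.

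For uniqueness, if both $\psi_1$ and $\psi_2$ satisfy $\phi = \psi_i \circ f - \psi_i$, then $\psi_1 - \psi_2$ is a continuous $f$-invariant function, hence constant on the dense orbit $O_+(x_0)$ and therefore constant on $M$. The hardest step is the refined shadowing estimate above; with it in hand, the rest of the argument is automatic.
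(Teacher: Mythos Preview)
The paper does not supply its own proof of this theorem; it is quoted as a classical result with pointers to Katok--Hasselblatt (Theorem 19.2.1) and Kalinin--Nitica (Theorem 5.3.1). Your proposal is precisely the standard argument found in those references: define $\psi$ by partial sums along a dense forward orbit, invoke the Anosov closing lemma to compare any nearly-closing orbit segment to a genuine periodic orbit (on which $S_\phi f$ vanishes by hypothesis), and use the exponential profile of the shadowing error to sum the H\"older increments as a geometric series. So your approach is correct and coincides with the literature proof the paper cites.

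Two small remarks. First, the exponential closing estimate $d(f^k(z),y_k)\le C'\delta(\lambda^k+\lambda^{T-1-k})$ is exactly the Anosov Closing Lemma (e.g.\ Theorem 6.4.15 in Katok--Hasselblatt); your informal description via local product structure is the right intuition but a careful proof goes through a contraction-mapping argument on the sequence space rather than a single stable/unstable intersection, so it is worth citing the closing lemma directly rather than re-deriving it. Second, the statement also covers $\phi:M\to\mathbb{S}^1$; your write-up is phrased for real-valued $\phi$, but since $\mathbb{S}^1$ is an abelian group with a translation-invariant metric, the same partial-sum construction and the same H\"older estimate go through verbatim once absolute values are read as $\mathbb{S}^1$-distances. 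Neither point is a gap, just a matter of completeness.
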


Anosov diffeomorphisms were systematically studied since 1960s. For instance, see \cite{An67}, \cite{Sm67}, \cite{Fr70} and \cite{Man73} in some detail. One of the most typical examples is the Anosov toral automorphism that is also called \emph{Thom toral automorphism}. Precisely, any hyperbolic linear map $A:\RR^n\rightarrow\RR^n$ with $A\in {\rm GL}(n,\ZZ)$ will induce a quotient map on the $n$-torus $\TT^n=\RR^n/\ZZ^n$, and the induced map is exactly an Anosov toral automorphism. More general Anosov diffeomorphisms are hyperbolic nilmanifold automorphisms.

Let $G$ be a simply connected nilpotent Lie group and $\Gamma$ a uniform lattice of $G$, i.e., $\Gamma$ is a discrete subgroup of $G$ and $G/\Gamma$ is compact, then $M\triangleq G/\Gamma$ is called a \emph{nilmanifold}. Assume that $\phi: G\to G$ is a continuous automorphism with $\phi(\Gamma)=\Gamma$, $\phi$ will naturally induce a diffeomorphism $f: M\to M$. Denote by $\mathfrak{g}=T_eG$ the Lie algebra of $G$ and $\exp:\mathfrak{g}\to G$ the exponential map, where $T_eG$ is the tangent space of $G$ at the identity element $e \in G$. Let $\Phi=D_e\phi:\mathfrak{g}\to\mathfrak{g}$ be the Lie algebraic automorphism with $\phi\circ\exp=\exp\circ\Phi$. If $\Phi$ is hyperbolic, i.e., all its eigenvalues are not equal to 1 in modulus, then $f$ is called a \emph{hyperbolic nilmanifold automorphism}. Such hyperbolic nilmanifold automorphisms are standard examples of Anosov diffeomorphisms.

\begin{theorem}\cite[Theorem C]{Man74}\label{THM:A on N}
	If $f$ is an Anosov diffeomorphism on a nilmanifold $M=G/\Gamma$, then it is topologically conjugated to a hyperbolic nilmanifold automorphism. In particular, $f$ is transitive and $f_*:\pi_1(M)=\Gamma\circlearrowleft$ is hyperbolic, where $\pi_1(M)$ is the fundamental group of $M$.
\end{theorem}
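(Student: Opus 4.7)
The plan is to follow the Franks--Manning strategy: construct an algebraic model via the fundamental group, establish its hyperbolicity, and then realize a conjugacy by a shadowing argument on the universal cover.

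First, I build the algebraic candidate. The diffeomorphism $f$ induces an automorphism $f_{\ast}:\Gamma\to\Gamma$, and Malcev rigidity---every automorphism of a uniform lattice in a simply connected nilpotent Lie group extends uniquely to a continuous automorphism of the ambient group---produces a unique $\phi:G\to G$ with $\phi|_{\Gamma}=f_{\ast}$, which descends to a nilmanifold automorphism $A:M\to M$. Fix a lift $\tilde f:G\to G$ of $f$. Because $\tilde f$ and $\phi$ induce the same action on $\Gamma$, and because $f$ lives on the compact quotient $M$, an induction along the lower central series of $G$ shows that the displacement $\phi(x)^{-1}\tilde f(x)$ is uniformly bounded over $x\in G$. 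In particular, every $\tilde f$-orbit is a bi-infinite pseudo-orbit for $\phi$.

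Second---and this is where the Anosov hypothesis really enters---I must show $\Phi = D_e\phi$ is hyperbolic on $\mathfrak g$. Because the eigenvalues of $\Phi$ on each step of the lower-central-series filtration arise as iterated products of the eigenvalues on the abelianization $\mathfrak g/[\mathfrak g,\mathfrak g]$, it suffices to handle (a) hyperbolicity on the abelianization, and (b) the exclusion of modulus-one products on the higher graded pieces. For (a), a Franks-type argument works: a modulus-one eigenvalue would yield a direction in which $A^n$ grows at most polynomially, which combined with the $O(1)$ estimate from step one would force $\tilde f$-orbits to grow sub-exponentially in that direction, contradicting the exponential expansion/contraction dictated by the Anosov splitting lifted to the universal cover. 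Point (b) is handled by the same polynomial-versus-exponential dichotomy inside each graded piece, playing the lifted Anosov rates against the drift bound. I expect this step to be the principal obstacle, since one is forced to translate purely dynamical Anosov information into a linear-algebraic statement that the group structure of $G$ and $\Gamma$ alone cannot supply.

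Third, once $\phi$ is known to be hyperbolic, shadowing delivers the conjugacy. For each $x\in G$ the pseudo-orbit $\{\tilde f^n(x)\}$ is $\phi$-shadowed by a unique orbit $\{\phi^n(h(x))\}$; existence and uniqueness come from the global product structure of the hyperbolic $\phi$ on the simply connected $G$, or equivalently from solving a contraction fixed-point problem separately in the stable and unstable eigenspaces of $\Phi$. This defines $h:G\to G$ with $h\circ\tilde f = \phi\circ h$; uniformity of the shadowing estimate gives continuity, and $\Gamma$-equivariance (inherited from $\tilde f$ and $\phi$ both inducing $f_{\ast}$) gives descent to a continuous map $H:M\to M$ satisfying $H\circ f = A\circ H$. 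Swapping the roles of $\tilde f$ and $\phi$ and re-running the shadowing construction produces a two-sided inverse, so $H$ is a homeomorphism and the conjugacy is established. The remaining assertions follow at once: hyperbolic nilmanifold automorphisms are topologically mixing, hence transitive, and this transfers through $H$ to $f$; and $f_{\ast}=\phi|_{\Gamma}$ is hyperbolic by the very construction of $\phi$.
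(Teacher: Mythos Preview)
The paper does not prove this theorem; it is quoted as a known result from Manning \cite{Man74} (building on Franks \cite{Fr69}) and used as a black box, so there is no ``paper's own proof'' to compare against. Your sketch is a faithful outline of the classical Franks--Manning argument and is broadly correct in spirit.

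Two places deserve more care if you actually intend to carry this out. First, your step (b)---ruling out modulus-one eigenvalues on the higher graded pieces of the lower central series---is precisely the content that separates Manning's nilmanifold theorem from Franks' toral case, and ``the same polynomial-versus-exponential dichotomy'' is a slogan, not a proof; Manning's actual argument here is delicate and uses the structure of the graded Lie algebra together with the lifted Anosov estimates in a way that is not a routine repetition of the abelian case. Second, the ``swap the roles of $\tilde f$ and $\phi$'' step for producing an inverse works, but only once you have verified that the lifted Anosov map $\tilde f$ enjoys a \emph{global} shadowing property on the non-compact $G$ (local shadowing on $M$ plus $\Gamma$-equivariance handles this, but it should be said). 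The more common route to bijectivity is to observe that $H$ is homotopic to the identity, hence surjective by degree, and injective by expansivity of $f$.
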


\begin{remark}
	We are supposed to pay more attention to Theorem \ref{THM:A on N}.
	\begin{enumerate}\label{RMK:A}
		\item Let $f:M\to M$ be an Anosov diffeomorphism on a nilmanifold $M$. Since $f$ is transitive, it satisfies Theorem \ref{THM:Liv}. Moreover, the whole manifold $M$ is a homoclinic class of $f$. Namely, for any two periodic points $p$ and $q$, there are two points $x, y\in M$ such that
		\begin{align}
			x\in W^s(p)\pitchfork W^u(q) \quad {\rm and}& \quad y\in W^s(q)\pitchfork W^u(p).\nonumber
		\end{align}
		\item By \cite[Theorem 2.11]{Ra72}, the automorphism $f_*:\pi_1(M)=\Gamma\circlearrowleft$ can be uniquely extended as an automorphism (still denoted by) $f_*:G\circlearrowleft$. Theorem \ref{THM:A on N} claims that its induced diffeomorphism (still denoted by) $f_*:G/\Gamma\circlearrowleft$ is a hyperbolic nilmanifold automorphism, and the original Anosov diffeomorphism $f$ is topologically conjugate to $f_*$.
		\item Smale in \cite{Sm67} poses the problem of classifying all the Anosov diffeomorphisms on compact manifolds up to the topological conjugacy. The positive answer by Manning that extended the result of Franks in \cite{Fr69} is actually based on nilmanifolds. In fact, it should be noticed that the analogical conclusion may not be true for Anosov diffeomorphisms on \emph{infra-nilmanifolds}, which is a more general algebraic class of manifolds. And there is some discussion and progress made by Dekimpe and Hammerlindl (see \cite{De12} and \cite{Ham14} for commentary).
	\end{enumerate}
\end{remark}

\subsection{The asymptotically rational independence}

The final preparation is related to an algebraic notion. Recall that we say two real numbers $a$ and $b$ are \emph{rationally independent} if $k\cdot a+l\cdot b\not=0$ for any $k, l\in\ZZ$ with $|k|+|l|>0$. It is clear that if $a$ and $b$ are rationally independent, then the set $\big\{k\cdot a+l\cdot b~|~k,l\in\ZZ\big\}$ is dense near 0 (hence dense in $\RR$). Here is an asymptotic version of the rational independence introduced in \cite{GS19}.

\begin{definition}\label{def:ARI}
	Let $\{a_n\}_{n\in\NN}$ be a sequence of real numbers and $b$ a real number. The sequence $\{a_n\}_{n\in\NN}$ is called \emph{asymptotically rationally independent} of $b$, if there exist $0<\e_n\rightarrow0$ and $k_n,l_n\in\ZZ$, such that $$0<k_n\cdot a_n+l_n\cdot b<\e_n.$$
\end{definition}

The following properties about the asymptotic rational independence will be helpful later.
\begin{lemma}\label{lem:ARI}
	Let $\{a_n\}_{n\in\NN}$ be a sequence of real numbers, and $b\in\RR\setminus\{0\}$, such that $a_n/b=l_n/k_n$ with $l_n,k_n\in\ZZ$ and ${\it gcd}(l_n,k_n)=1$.
	\begin{enumerate}
		\item If $\{a_n\}_{n\in\NN}$ is asymptotically rationally independent of $b$, then $|k_n|\to\infty$ as $n\to\infty$. Moreover,
		$$
		\inf_{k,l\in\ZZ}\big\{ka_n+lb ~|~ ka_n+lb>0\big\}
		=\left|\frac{b}{k_n}\right|.
		$$
		\item If any subsequence of $\{a_n\}_{n\in\NN}$ is not asymptotically rationally independent of $b$, then there exists $c>0$ such that for every $n\in\NN$, $a_n=s_n\cdot c$ and $b=t\cdot c$ with $s_n,t \in\ZZ$.
	\end{enumerate}
\end{lemma}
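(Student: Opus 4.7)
The key observation for both parts is that, since $a_n = (l_n/k_n)\,b$ with $\gcd(l_n,k_n)=1$, the additive subgroup
\[
\{k a_n + l b : k, l \in \ZZ\} \;=\; \frac{b}{k_n}\cdot \{k l_n + l k_n : k, l \in \ZZ\}
\]
equals $(b/k_n)\ZZ$ by B\'ezout's identity. So the set of positive values of $k a_n + l b$ is exactly $|b/k_n|\cdot \NN$, and its infimum is $|b/k_n|$.

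For part (1), I would start by establishing the B\'ezout identity above, which immediately gives the claimed formula $\inf\{ka_n+lb : ka_n+lb>0\} = |b/k_n|$. Assuming $\{a_n\}$ is asymptotically rationally independent of $b$, there are $k_n',l_n'\in\ZZ$ and $\e_n\to 0$ with $0 < k_n' a_n + l_n' b < \e_n$. Since $|b/k_n|$ is the infimum of positive values of $k a_n + l b$, we get $|b/k_n| \le k_n' a_n + l_n' b < \e_n$, so $|k_n| \ge |b|/\e_n \to \infty$.

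For part (2), I would argue by contrapositive. Suppose there is no $c>0$ with $a_n = s_n c$ and $b = tc$ for integers $s_n, t$. I claim $\{|k_n|\}$ is unbounded: indeed, if $|k_n|\le N$ for all $n$, then $|k_n|$ ranges over a finite set of positive integers, and setting $K$ to be the least common multiple of these values and $c = |b|/K$, $t = \mathrm{sgn}(b)\cdot K$, one checks that $b = tc$ and $a_n = (\mathrm{sgn}(b)\,l_n K/k_n)\,c$ is an integer multiple of $c$, since $k_n \mid K$. This contradicts our assumption, so $|k_n|$ is unbounded, and we may extract a subsequence (still denoted $a_n$) with $|k_n|\to\infty$. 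By the formula from part (1), pick integers $k_n', l_n'$ realizing $k_n' a_n + l_n' b = |b/k_n|$ (which exist by B\'ezout, choosing the sign of the pair so the value is positive). Setting $\e_n = 2|b/k_n|\to 0$ gives $0 < k_n' a_n + l_n' b < \e_n$, so this subsequence is asymptotically rationally independent of $b$, as required.

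I expect no serious obstacle; the entire argument reduces to the B\'ezout observation plus an lcm construction. The one place to be careful is signs: the definition of asymptotic rational independence requires strictly positive values, so one must choose the B\'ezout coefficients with the right sign, and in part (2) one must allow $t \in \ZZ$ (possibly negative) while insisting $c > 0$, which is why we use $c = |b|/K$ rather than $c = b/K$.
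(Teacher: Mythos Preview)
Your proposal is correct and follows essentially the same route as the paper: both arguments rest on the B\'ezout identification $\{ka_n+lb:k,l\in\ZZ\}=(b/k_n)\ZZ$, derive the infimum formula and $|k_n|\to\infty$ from it, and then handle part~(2) by bounding the $|k_n|$ and taking a common multiple (you use the lcm, the paper uses $K!$, which is a cosmetic difference).
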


\begin{proof}
	It is easily seen that $\{a_n\}$ is asymptotically rationally independent of $b$ if and only if
	$$
	\lim_{n\to\infty} \inf_{k,l\in\ZZ}\big\{ka_n+lb ~|~ ka_n+lb>0\big\}=0.
	$$
	Since $a_n/b=l_n/k_n$ with ${\it gcd}(l_n,k_n)=1$, we have
	$$
	\big\{ka_n+lb~|~k,l\in\ZZ\big\} = (b/k_n)\ZZ \triangleq
	\big\{ bm/k_n~|~m\in\ZZ \big\}.
	$$
	This directly leads to
	$$
	\inf_{k,l\in\ZZ}\big\{ka_n+lb ~|~ ka_n+lb>0\big\}
	=\left|\frac{b}{k_n}\right|
	\qquad {\rm and} \qquad
	\lim\limits_{n\to\infty} \left|k_n\right| = +\infty.
	$$
	
	If any subsequence of the sequence $\{a_n\}$ is not asymptotically rationally independent of $b$, there exists an upper bound $\mathrm{K}\in\NN$ for the related sequence $\big\{|k_n|\big\}_{n\in \NN}$. Direct calculations show that the positive number $c=|b/\mathrm{K}!|$ will satisfy the need of the desired conclusion. Namely, there are $s_n,t \in\ZZ$ such that $a_n=s_n\cdot c$ and $b=t\cdot c$. This ends the proof.
\end{proof}

\section{Proof of Theorem \ref{main}}\label{sec:Main proof}

\begin{proof}[Proof of Theorem \ref{main}]
	For any $K_0\in\RR$ and $\e >0$, we are aimed to show that there exists a periodic point $z\in \mathrm{Per}(f)$ such that $S_\phi f(z) \in (K_0-\e,K_0+\e)$.
	
	Since $f$ is a transitive Anosov diffeomorphism, by the item 1 of Remark \ref{RMK:A}, for these two given periodic points $p$ and $q$, we have two points
	\begin{align}
		x\in W^s(p)\pitchfork W^u(q) \quad {\rm and}& \quad y\in W^s(q)\pitchfork W^u(p).\nonumber
	\end{align}
	
	Note that $\phi:M \rightarrow \RR$ is a H\"older continuous function, i.e., there exist two constants $\theta\in(0,1)$ and $C>0$ such that for any $x,y\in M$,
	$$|\phi(x)-\phi(y)| \leq C\cdot d^\theta(x,y),$$
	where $d$ is the metric induced by the Riemannian structure on the manifold $M$.
	
	\begin{figure}[htbp]
		\centering
		\includegraphics[width=12.138cm]{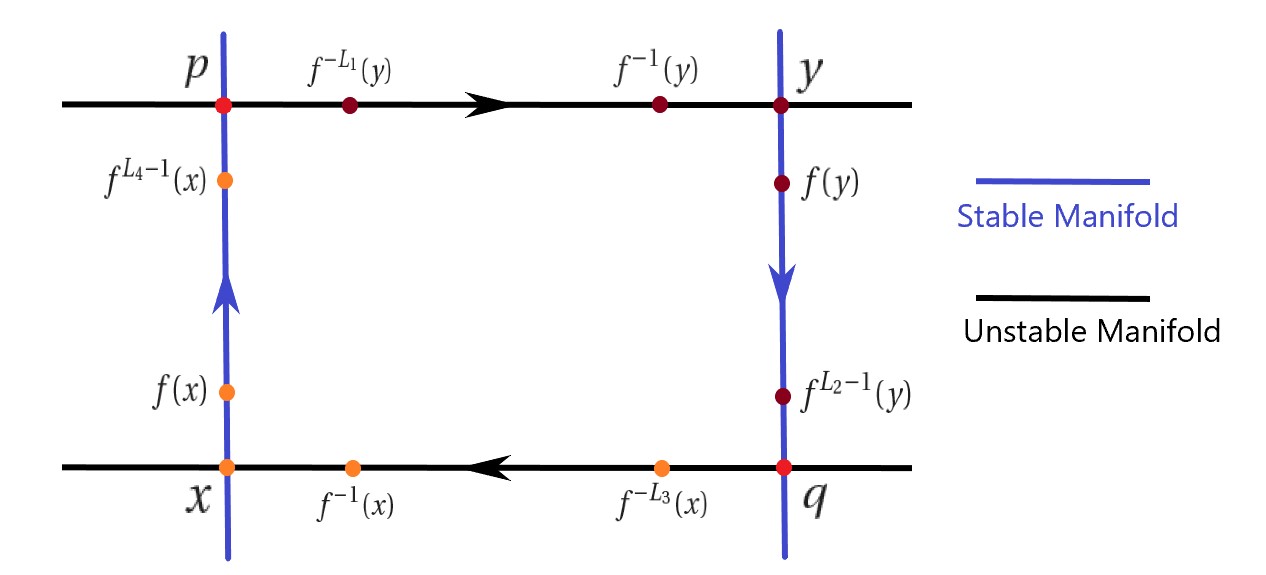}
		\caption{Periodic pseudo-orbit}
		\label{JZT}
	\end{figure}
	
	Take $L_i\in\NN, i=1, 2, 3, 4$, such that $L_1, L_4$ are multiples of $\pi(p)$ and $L_2, L_3$ are multiples of $\pi(q)$. Consider the periodic pseudo-orbit $Q$ as shown in Figure \ref{JZT},
	$$
	Q=\left\{f^{-L_1}(y), \cdots,f^{-1}(y),~y~,~f(y), \cdots, f^{L_2-1}(y),
	f^{-L_3}(x), \cdots,f^{-1}(x),~x~,~f(x), \cdots, f^{L_4-1}(x)\right\}.
	$$
	Denote by
	$$
	\delta_0=
	\max\left\{d(x,p),d(y,q),d(x,q),d(y,p)\right\},
	$$
	$$
	L_0=\min\big\{L_1,L_2,L_3,L_4\big\} \quad {\rm and} \quad L=\sum_{i=1}^4L_i,
	$$
	then the period of pseudo-orbit $Q$ is $L$.
	
	Since $x\in W^s(p)\pitchfork W^u(q)$ and $y\in W^s(q)\pitchfork W^u(p)$, there exists $H>0$, such that for every $n\geq0$,
	\begin{align*}
		d(f^n(x),f^n(p))\leq H\lambda^nd(x,p),\qquad
		& d(f^{-n}(x),f^{-n}(q))\leq H\lambda^nd(x,q); \\
		d(f^n(y),f^n(q))\leq H\lambda^nd(y,q),\qquad
		& d(f^{-n}(y),f^{-n}(p))\leq H\lambda^nd(y,p).
	\end{align*}
	This implies $Q$ is a $\delta$-pseudo-orbit, where
	$$\delta = 2 H \lambda^{L_0}\delta_0.$$
	
	In the following, we will always choose $L_i, i=1, 2, 3, 4$, such that there exists $\alpha\in(0,1)$ satisfying
	$$
	L_0\ge \alpha L_i,~i=1, 2, 3, 4,
	$$
	thus $\delta=2H\lambda^{L_0}\delta_0\leq 2H\lambda^{\alpha L_i}\delta_0$, for $i=1, 2, 3, 4.$
	
	By the Lipschitz pseudo-orbit shadowing property in Proposition \ref{Hyper-prop}, we are going to obtain a proper $L$-periodic point $z$ that $\mu\delta$-shadows the periodic $\delta$-pseudo-orbit $Q$. Without loss of generality, we denote by $z$ the point shadowing exactly the point $y\in W^s(q)\pitchfork W^u(p)$. Furthermore, in order to obtain the desired $L$-periodic point $z$ satisfying $S_\phi f(z) \in (K_0-\e,K_0+\e)$, we will take care of these iterations $L_i$'s with great patience later.
	
	In the light of Lemma \ref{lem:ARI}, we now divide our proof into the following lemmas by studying the relationships between $S_\phi f(p)$ and $S_\phi f(q)$ in the viewpoint of the rational independence.
	
	\begin{lemma}\label{Thm-1}
		Let $f:M\rightarrow M$ be a transitive Anosov diffeomorphism. Assume $\phi:M\rightarrow \RR$ is a H\"older continuous observation, and there exist $p,q\in\mathrm{Per}(f)$ with $S_\phi f(p)<0<S_\phi f(q)$. If $S_\phi f(p)$ and $S_\phi f(q)$ are rationally independent, then the set $\big\{S_\phi f(z)~|~z\in \mathrm{Per}(f)\big\}$ is dense in $\RR$.
	\end{lemma}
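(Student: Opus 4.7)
The plan is to tune the parameters $L_1, L_2, L_3, L_4$ of the periodic pseudo-orbit $Q$ so that the shadowing periodic point $z$ has Birkhoff sum within $\e$ of any prescribed target $K_0$. Writing $a = S_\phi f(p) < 0$ and $b = S_\phi f(q) > 0$, the algebraic engine will be the density of $\{M_1 a + M_2 b : M_1, M_2 \in \ZZ_{>0}\}$ in $\RR$, which I will extract from the hypothesis that $a$ and $b$ are rationally independent.

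First I would decompose the Birkhoff sum along $z$. Set $m_1 = L_1/\pi(p)$, $m_4 = L_4/\pi(p)$, $m_2 = L_2/\pi(q)$, $m_3 = L_3/\pi(q)$, and $M_1 = m_1 + m_4$, $M_2 = m_2 + m_3$. The target estimate is
\[ S_\phi f(z) \;=\; M_1\,a \,+\, M_2\,b \,+\, C_0 \,+\, R(L_0), \]
where $C_0 \in \RR$ depends only on $x, y, p, q, \phi$ and $R(L_0) \to 0$ as $L_0 \to \infty$. Along each of the four pseudo-orbit segments the points approach the relevant periodic orbit exponentially fast, with factor $H\lambda^j$, so the H\"older bound $|\phi(u) - \phi(v)| \le C\,d(u,v)^\theta$ makes each of the four telescoping series
\[ \sum_{j=1}^{\infty} \bigl[\phi(f^{-j}(y)) - \phi(f^{-j}(p))\bigr], \quad \sum_{j=0}^{\infty} \bigl[\phi(f^j(y)) - \phi(f^j(q))\bigr], \quad \text{and the two analogues for } x, \]
absolutely convergent; their total defines $C_0$. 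The truncation error from cutting each series at $L_i$ terms is $O(\lambda^{\theta L_0})$, while the shadowing contribution is controlled by $L \cdot C(\mu\delta)^\theta$ with $\delta = 2H\lambda^{L_0}\delta_0$ and $L \le 4L_0/\alpha$, hence bounded by $C' L_0 \lambda^{\theta L_0} \to 0$.

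Next I would run the algebraic approximation. Let $K = K_0 - C_0$. Since $a$ and $b$ are rationally independent, $\ZZ a + \ZZ b$ is dense in $\RR$, so I pick $(k_0, l_0) \in \ZZ^2$ with $|k_0 a + l_0 b - K| < \e/4$. Then, using continued-fraction convergents of the irrational $b/|a|$, I find arbitrarily large positive integer pairs $(k_1, l_1)$ with $k_1/l_1 \to b/|a|$ and $|k_1 a + l_1 b| < \e/4$. Setting $M_1 = k_0 + k_1$, $M_2 = l_0 + l_1$ yields positive integers with $|M_1 a + M_2 b - K| < \e/2$. Now I split $M_1 = m_1 + m_4$ and $M_2 = m_2 + m_3$ as evenly as possible. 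Because the previous step forces $M_1/M_2 \to b/|a| \in (0, \infty)$, the four values $L_i = m_i\,\pi(\cdot)$ are mutually comparable, so a uniform $\alpha \in (0,1)$ satisfies $L_0 \ge \alpha L_i$ for all $i$. For $M_1, M_2$ large enough the residual $R(L_0)$ drops below $\e/2$, producing $|S_\phi f(z) - K_0| < \e$ as desired.

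The principal obstacle is the coordination between the algebra and the geometry: the rational-independence argument must produce positive integer pairs $(M_1, M_2)$ of comparable magnitude, or else the four segment lengths would fail the uniform bound $L_0 \ge \alpha L_i$ and the shadowing error $L(\mu\delta)^\theta$ would cease to be controllable. Fortunately, every approximating pair must satisfy $M_1/M_2 \to b/|a|$, a fixed positive number, so the pseudo-orbit segments stay automatically balanced. A secondary technicality is verifying that the four series defining $C_0$ converge absolutely --- which follows from H\"older regularity combined with the exponential contraction rates on the stable and unstable manifolds of $p$ and $q$ --- so that $C_0$ is a genuine constant that can be absorbed into the target value $K$.
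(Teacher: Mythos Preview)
Your proposal is correct and follows essentially the same approach as the paper: build the pseudo-orbit $Q$ from the heteroclinic points $x,y$, decompose $S_\phi f(z)$ into the leading term $M_1 a + M_2 b$, a fixed constant $C_0$ (the paper's $K=K_1+K_2+K_3+K_4$) coming from the four absolutely convergent H\"older series, and a remainder controlled via $L_0\lambda^{\theta L_0}\to 0$, then use rational independence to hit $K_0-C_0$ while keeping $M_1/M_2\to b/|a|$ so that a uniform $\alpha$ exists. The only cosmetic differences are that the paper enforces symmetry by taking $L_1=L_4$ and $L_2=L_3$ from the outset, and it simply asserts the existence of infinitely many approximating pairs $(m_k,n_k)$ where you invoke continued fractions explicitly.
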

	
	\begin{proof}[Proof of Lemma \ref{Thm-1}]
		Fix $K_0\in\RR$ and $\e>0$, we want to show there exists a periodic point $z\in \mathrm{Per}(f)$ such that $S_\phi f(z)\in(K_0-\e,K_0+\e)$.	
		For simplicity, we consider $p$ and $q$ are fixed points at first.
		
		We start to deal with the H\"older continuous observation $\phi$ along the pseudo-orbit $Q$. Keeping the orbital segment $Q_1=\left\{f^{-L_1}(y),...,f^{-1}(y)\right\}$ with the choice of $ y\in W^s(q)\pitchfork W^u(p) $ in mind, we compute directly:
		$$
		\left|{\phi(f^{-i}(y))-\phi(f^{-i}(p))}\right|
		\leq C\cdot d^\theta(f^{-i}(y), f^{-i}(p))
		\leq { C\cdot (H\lambda^id(y,p))^\theta},
		$$
		which implies that the series $\sum_{i=1}^{+\infty}({\phi(f^{-i}(y))-\phi(f^{-i}(p))})$ is absolutely convergent. Denote by
		$$K_1 = \sum_{i=1}^{+\infty}
		\left({\phi(f^{-i}(y))-\phi(f^{-i}(p))}\right),$$
		for the convergence of the series, there exists $\tilde{L}_1\in\NN$ such that any $L_1\geq \tilde{L}_1$ implies
		\begin{align}\label{cal:pseudo-errors}
			\sum_{i=1}^{L_1}\left({\phi(f^{-i}(y))-\phi(f^{-i}(p))}\right)
			\in \left(K_1-\frac{\e}{9},K_1+\frac{\e}{9}\right).
		\end{align}
		
		Since the point $z$ $\mu\delta$-shadows this $\delta$-pseudo-orbit with $d(f^{-i}(z), f^{-i}(y))\le \mu\delta$, we have
		$$
		\sum_{i=1}^{L_1}\left|{\phi(f^{-i}(z))-\phi(f^{-i}(y))}\right|
		\leq  {L_1}\cdot C \cdot (\mu\cdot\delta)^\theta.
		$$
		Note that $\delta=2H\lambda^{L_0}\delta_0\leq 2H\lambda^{\alpha L_1}\delta_0 $ and $\lim\limits_{L_1\to +\infty} {L_1 \cdot \lambda ^{\theta \alpha L_1}}=0$. By enlarging $\tilde{L}_1$, for $L_1\geq \tilde{L}_1$, we have
		\begin{align}\label{cal:true-errors}
			\sum_{i=1}^{L_1}
			\left|{\phi(f^{-i}(z))-\phi(f^{-i}(y))}\right|
			\in \left(-\frac{\e}{9},\frac{\e}{9}\right).
		\end{align}
		
		Therefore, combining $(\ref{cal:pseudo-errors})$ and $(\ref{cal:true-errors})$, we get the estimation as follow:
		\begin{align}
			\sum_{i=1}^{L_1}\phi(f^{-i}(z))-L_1\cdot\phi(p) =\sum_{i=1}^{L_1}\left(\phi(f^{-i}(z))-\phi(f^{-i}(p))\right)\in \left(K_1-\frac{2\e}{9}, K_1+\frac{2\e}{9}\right).
		\end{align}
		Moreover, in the same style, we conclude that there exists $\tilde{L}_i$ large enough such that $L_i\geq \tilde{L}_i$ implies
		\begin{align}\label{cal:pt-errors}
			\sum_{i=1}^{L}\phi(f^{i}(z))-
			\left((L_1+L_4)\phi(p)+(L_2+L_3)\phi(q)\right)\in \left(K-\frac{8\e}{9},K+\frac{8\e}{9}\right),
		\end{align}
		where
		\begin{align}
			K_2 = \sum_{i=0}^{+\infty}\left({\phi(f^{i}(y))-\phi(f^{i}(q))}\right), \quad & \quad K_3 = \sum_{i=1}^{+\infty}\left({\phi(f^{-i}(x))-\phi(f^{-i}(q))}\right),\nonumber\\
			K_4 = \sum_{i=0}^{+\infty}\left({\phi(f^{i}(x))-\phi(f^{i}(p))}\right), \quad & \quad K=K_1+K_2+K_3+K_4.\nonumber
		\end{align}
		It should be noticed that this constant $K$ is closely related to the periodic points $p,q\in\mathrm{Per}(f)$.
		
		Since $\phi(p)$ and $\phi(q)$ are rationally independent, so are $2\phi(p)$ and $2\phi(q)$. For the constant $K_0-K$, there exist $m,n\in\NN$ large enough such that
		\begin{align}\label{app:RI-1}
			2m\phi(p)+2n\phi(q) \in
			\left(K_0-K-\frac{\e}{9},~ K_0-K+\frac{\e}{9}\right).
		\end{align}
		It is worth mentioning here that there are infinitely many $m,n\in\NN$ satisfying $(\ref{app:RI-1})$, i.e., for any $k\in \NN$, there exist $m_k, n_k\ge k$ such that
		\begin{align}\label{app:RI-2}
			2m_k\phi(p)+2n_k\phi(q) \in
			\left(K_0-K-\frac{\e}{9},~ K_0-K+\frac{\e}{9}\right).
		\end{align}
		
		Moreover, from the inequality (\ref{app:RI-2}) divided by $n_k$, we can get
		$$
		\dfrac{m_k}{n_k} \to -\dfrac{\phi(q)}{\phi(p)} \quad {\rm as} \quad k\to+\infty.
		$$
		Thus, for $k$ large enough, we take
		$$ L_1=L_4=m_k \quad {\rm and} \quad L_2=L_3=n_k, $$
		then we can get $\alpha\in(0,1)$ satisfying $L_0\ge \alpha L_i$ $(i=1,2,3,4)$, and $L_i\ge k$ $(i=1,2,3,4)$ satisfying all these above estimations contributed to the estimation $(\ref{cal:pt-errors})$ in advance.
		
		Hence, combining $(\ref{cal:pt-errors})$ and $(\ref{app:RI-2})$, we obtain the desired $L$-periodic point $z$ such that
		\begin{align}
			S_\phi f(z)=\sum_{i=1}^{L}\phi(f^{i}(z))\in (K_0-\e,K_0+\e).
		\end{align}
		
		Now we assume that $p$ and $q$ are general periodic points, and we just need deal with this situation under some slight modifications. Analogously, we can denote by $K=K_1+K_2+K_3+K_4$, where
		\begin{align}
			K_1 = \sum_{j=1}^{\pi(p)}\sum_{i=0}^{+\infty}
			\left({\phi(f^{-i\cdot\pi(p)-j}(y))- \phi(f^{-j}(p))}\right), \quad & \quad
			K_2 = \sum_{j=0}^{\pi(q)-1}\sum_{i=0}^{+\infty}
			\left({\phi(f^{i\cdot\pi(q)+j}(y))- \phi(f^{j}(q))}\right),\nonumber\\
			K_3 = \sum_{j=1}^{\pi(q)}\sum_{i=0}^{+\infty}
			\left({\phi(f^{-i\cdot\pi(q)-j}(x))- \phi(f^{-j}(q))}\right), \quad & \quad
			K_4 = \sum_{j=0}^{\pi(p)-1}\sum_{i=0}^{+\infty}
			\left({\phi(f^{i\cdot\pi(p)+j}(x))- \phi(f^{j}(p))}\right).\nonumber
		\end{align}
		
		Since $S_\phi f(p)$ and $S_\phi f(q)$ are rationally independent now, so are $2S_\phi f(p)$ and $2S_\phi f(q)$. For the constant $K_0-K$, for any $k\in\NN$, there exist $ m_k, n_k\ge k,$ such that
		\begin{align}\label{app:RI2}
			2m_k\cdot S_\phi f(p)+2n_k\cdot S_\phi f(q) \in \left(K_0-K-\frac{\e}{9},~ K_0-K+\frac{\e}{9}\right).
		\end{align}
		For $k$ large enough, let
		$$ L_1=L_4=m_k\cdot\pi(p) \quad {\rm and} \quad L_2=L_3=n_k\cdot\pi(q), $$
		then $L_i$'s satisfy all the following estimations in advance,
		\begin{align}
			\sum_{j=1}^{\pi(p)}\sum_{i=0}^{m_k-1}\left({\phi(f^{-i\cdot\pi(p)-j}(y))- \phi(f^{-j}(p))}\right) ~=~&\sum_{i=1}^{L_1}\phi(f^{-i}(y))-m_k\cdot S_\phi f(p) ~\in~ \left(K_1-\frac{\e}{9},K_1+\frac{\e}{9}\right),\nonumber\\
			\sum_{j=0}^{\pi(q)-1}\sum_{i=0}^{n_k-1}\left({\phi(f^{i\cdot\pi(q)+j}(y))- \phi(f^{j}(q))}\right) ~=~&\sum_{i=0}^{L_2-1}\phi(f^{i}(y))-n_k\cdot S_\phi f(q)  ~\in~ \left(K_2-\frac{\e}{9},K_2+\frac{\e}{9}\right),\nonumber\\
			\sum_{j=1}^{\pi(q)}\sum_{i=0}^{n_k-1}\left({\phi(f^{-i\cdot\pi(q)-j}(x))- \phi(f^{-j}(q))}\right) ~=~&\sum_{i=1}^{L_3}\phi(f^{-i}(x))-n_k\cdot S_\phi f(q)  ~\in~ \left(K_3-\frac{\e}{9},K_3+\frac{\e}{9}\right),\nonumber\\
			\sum_{j=0}^{\pi(p)-1}\sum_{i=0}^{m_k-1}\left({\phi(f^{i\cdot\pi(p)+j}(x))- \phi(f^{j}(p))}\right) ~=~&\sum_{i=0}^{L_4-1}\phi(f^{i}(x))-m_k\cdot S_\phi f(p)  ~\in~ \left(K_4-\frac{\e}{9},K_4+\frac{\e}{9}\right).\nonumber
		\end{align}
		Combining these estimations, we have
		\begin{align}\label{cal:pseudo-errors2}
			\sum_{i=-L_1}^{L_2-1}\phi(f^{i}(y))+\sum_{i=-L_3}^{L_4-1}\phi(f^{i}(x)) - \left( 2m_k\cdot S_\phi f(p)+2n_k\cdot S_\phi f(q) \right) ~\in~ \left(K-\frac{4\e}{9},K+\frac{4\e}{9}\right).
		\end{align}
		
		Then there exists an $L$-periodic point $z\in \mathrm{Per}(f)$ that $\mu\delta$-shadows the $\delta$-periodic pseudo-orbit $Q$, and the same calculations as contributed to $(\ref{cal:true-errors})$ show that
		\begin{align}\label{cal:true-errors2}
			\sum_{i=1}^{L}\phi(f^{i}(z))-\left(\sum_{i=-L_1}^{L_2-1}\phi(f^{i}(y))+ \sum_{i=-L_3}^{L_4-1}\phi(f^{i}(x))\right)\in \left(-\frac{4\e}{9},\frac{4\e}{9}\right).
		\end{align}
		
		Finally, combining $(\ref{app:RI2})$, $(\ref{cal:pseudo-errors2})$ and $(\ref{cal:true-errors2})$, we obtain the desired $L$-periodic point $z\in \mathrm{Per}(f)$ such that $S_\phi f(z)\in (K_0-\e, K_0+\e)$. Consequently, the set $\big\{S_\phi f(z) ~|~ z\in \mathrm{Per}(f)\big\}$ is dense in $\RR$. This ends the proof of Lemma \ref{Thm-1}.
	\end{proof}
	
	\begin{remark}
		We actually proved a more general result. Let $f:M\rightarrow M$ be a $C^1$ diffeomorphism, and $\phi:M\rightarrow\RR$ be a H\"older continuous observation. If there exist two hyperbolic periodic points $p,q$ with the same index satisfying
		\begin{itemize}
			\item $p$ and $q$ are homoclinic related,
			\item $S_\phi f(p)<0<S_\phi f(q)$,
			\item $S_\phi f(p)$ and $S_\phi f(q)$ are rationally independent,
		\end{itemize}
		then the set $\big\{S_\phi f(z) ~|~ z\in \mathrm{Per}(f)\big\}$ is dense in $\RR$.
		
		The idea for proving this claim is the following. Take $x\in W^s(p)\pitchfork W^u(q)$ and $y\in W^s(q)\pitchfork W^u(p)$, then the set
		$$
		\Gamma=
		{\rm Orb}(p)\cup{\rm Orb}(q)\cup{\rm Orb}(x)\cup{\rm Orb}(y)
		$$
		is a hyperbolic set. There exists $\eta>0$, such that the maximal invariant set $\Lambda$ that is contained in the $\eta$-neighborhood of $\Gamma$ is also hyperbolic. Since the Lipschitz pseudo-orbit shadowing property (Proposition \ref{Hyper-prop}) still works on $\Lambda$, we can apply the same argument as proving Lemma \ref{Thm-1}.
	\end{remark}
	
	From now on, we assume that $S_\phi f(p)$ and $S_\phi f(q)$ are rationally dependent. Given a sequence $\{p_n\}_{n\in\NN}$  of periodic points, if the sequence $\big\{S_\phi f(p_n)\big\}_{n\in\NN}$ is asymptotically rationally independent of $S_\phi f(q)$, then $\big\{S_\phi f(p_n)\big\}_{n\in\NN}$ is also asymptotically rationally independent of $S_\phi f(p)$. Note that if there exists $p_n\in\mathrm{Per}(f)$ which is rationally independent of $S_\phi f(q)$, the proof will be done by Lemma \ref{Thm-1}. Therefore, we just come to deal with the following situation.
	
	\begin{lemma}\label{Thm-2}
		Let $f:M\rightarrow M$ be a transitive Anosov diffeomorphism. Assume $\phi:M\rightarrow \RR$ is a H\"older continuous observation, and there exist $p,q\in\mathrm{Per}(f)$ with $S_\phi f(p)<0<S_\phi f(q)$. If $S_\phi f(p)$ and $S_\phi f(q)$ are rationally dependent, moreover, there exists a sequence $\{p_n\}_{n\in\NN}$ of periodic points such that
		\begin{itemize}
			\item the sequence $\big\{S_\phi f(p_n)\big\}_{n\in\NN}$ is asymptotically rationally independent of $S_\phi f(q)$,
			\item for any $n\in\NN$, $S_\phi f(p_n)$ is rationally dependent of $S_\phi f(q)$,
		\end{itemize}
		then the set $\big\{S_\phi f(z) ~|~ z\in \mathrm{Per}(f)\big\}$ is dense in $\RR$.
	\end{lemma}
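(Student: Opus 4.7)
The plan is to extend the pseudo-orbit construction of Lemma \ref{Thm-1} by running the loop through three periodic points $p$, $q$, and $p_n$ (for a suitably large $n$) rather than only two. The roles of $p$ and $q$ are to supply, via their opposite-sign but rationally dependent Birkhoff sums, a coarse yet sign-balanced integer lattice of values achievable with non-negative multiplicities; the role of $p_n$ is to refine this lattice on the scale $\e$ so that any target $K_0$ can be hit.

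First I would set up the algebra. Since $S_\phi f(p)$ and $S_\phi f(q)$ are rationally dependent, Lemma \ref{lem:ARI}(2) gives $c>0$ and integers $s<0<t$ with $S_\phi f(p)=sc$ and $S_\phi f(q)=tc$. The hypothesis $S_\phi f(p_n)/S_\phi f(q)=l_n/k_n$ in lowest terms then yields $S_\phi f(p_n)=(l_nt/k_n)c$. The additive subgroup
$$
\Lambda_n \;=\; \ZZ\cdot S_\phi f(p)+\ZZ\cdot S_\phi f(q)+\ZZ\cdot S_\phi f(p_n)\;\subset\;\RR
$$
contains $(S_\phi f(q)/|k_n|)\ZZ$, whose mesh tends to $0$ by Lemma \ref{lem:ARI}(1). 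Moreover, the opposite signs of $S_\phi f(p)$ and $S_\phi f(q)$ allow me to write every element of $\Lambda_n$ as $k_1 S_\phi f(p)+k_2 S_\phi f(q)+k_3 S_\phi f(p_n)$ with $k_i\geq 0$ arbitrarily large: the zero-relations in $\ZZ^3$ are spanned by $(t,|s|,0)$ and $(0,-l_n,k_n)$, and adding a sufficiently large non-negative multiple of the already non-negative relation $(t,|s|,0)$ absorbs the (possibly) negative $q$-entry coming from $(0,-l_n,k_n)$, producing non-negative zero-relations that can shift $(k_1,k_2,k_3)$ by arbitrarily large non-negative amounts without changing the value.

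Next I would mimic the pseudo-orbit strategy of Lemma \ref{Thm-1} on a longer loop. Since the nilmanifold is a single homoclinic class (Remark \ref{RMK:A}(1)), for the fixed triple $\{p,q,p_n\}$ there are transverse heteroclinic points realizing each intersection $W^s(p)\pitchfork W^u(q)$, $W^s(q)\pitchfork W^u(p_n)$, $W^s(p_n)\pitchfork W^u(p)$; these are used to stitch a $\delta$-periodic pseudo-orbit $Q_n$ that stays near $p$ for $k_1\pi(p)$ iterates, transitions to $q$, stays near $q$ for $k_2\pi(q)$ iterates, transitions to $p_n$, stays near $p_n$ for $k_3\pi(p_n)$ iterates, and then returns to $p$. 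The same H\"older and hyperbolic estimates of Lemma \ref{Thm-1}, combined with Proposition \ref{Hyper-prop}, yield a shadowing periodic point $z$ with
$$
S_\phi f(z) \;=\; k_1 S_\phi f(p)+k_2 S_\phi f(q)+k_3 S_\phi f(p_n)+K(n)+R,
$$
where $K(n)$ is a finite sum of absolutely convergent heteroclinic correction series (analogous to $K$ in Lemma \ref{Thm-1}, just with one extra heteroclinic leg) and $R$ is a shadowing and series-truncation error that can be made arbitrarily small by taking the $k_i$ large.

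Given $K_0\in\RR$ and $\e>0$, one then chooses $n$ so large that the mesh of $\Lambda_n$ is below $\e/3$, picks $v_n\in\Lambda_n$ within $\e/3$ of $K_0-K(n)$, writes $v_n$ as a non-negative combination with the $k_i$'s so large that both the truncation of the $K(n)$-series and the H\"older shadowing error are below $\e/3$, and executes the pseudo-orbit construction to obtain $z\in\mathrm{Per}(f)$ with $S_\phi f(z)\in(K_0-\e,K_0+\e)$. The main obstacle I foresee is bookkeeping: the correction $K(n)$ and the stable/unstable rates at $p_n$ depend on $n$, so $n$ must be fixed first (using only the group-theoretic fact that $\Lambda_n$ has vanishing mesh) before the shadowing and series-truncation estimates of Lemma \ref{Thm-1} are rerun on the longer three-leg pseudo-orbit.
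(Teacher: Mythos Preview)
Your proposal is correct, and your identification of the ``fix $n$ first, then run the shadowing estimates'' order of operations is exactly right. However, the paper takes a simpler route: rather than a three-leg pseudo-orbit through $p$, $q$, and $p_n$, it observes that (after passing to a subsequence so that all $S_\phi f(p_n)$ have the same sign, say negative) the pair $(p_n,q)$ alone already has Birkhoff sums of opposite sign, and the lattice $\ZZ\cdot S_\phi f(p_n)+\ZZ\cdot S_\phi f(q)=(S_\phi f(q)/k_n)\ZZ$ has mesh below $\e$ for large $n$. Thus the paper simply reruns the two-leg construction of Lemma~\ref{Thm-1} verbatim with $p_n$ in the role of $p$; the periodic point $p$ itself is never used in the pseudo-orbit (if the $S_\phi f(p_n)$ were positive, one would pair $p_n$ with $p$ instead). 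The B\'ezout identity $m_0l_n+n_0k_n=\pm1$ gives an exact step of size $2S_\phi f(q)/k_n$, which is then iterated to reach $K_0-K_{p_n}$. Your three-point approach buys uniformity (no case split on the sign of $S_\phi f(p_n)$) at the cost of one extra heteroclinic leg and six rather than four segment lengths to manage; the paper's two-point approach is leaner but requires the preliminary sign reduction. A minor imprecision in your write-up: the two zero-relations $(t,|s|,0)$ and $(0,-l_n,k_n)$ need not \emph{span} the full kernel over $\ZZ$, only a finite-index subgroup, but your argument only uses that suitable non-negative zero-relations exist, which they do.
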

	
	\begin{proof}[Proof of Lemma \ref{Thm-2}]
		Fix $K_0\in\RR$ and $\e>0$, we want to show there exists a periodic point $z\in \mathrm{Per}(f)$ such that $S_\phi f(z)\in(K_0-\e,K_0+\e)$. By the idea of proving Lemma \ref{Thm-1}, for the slightly different situation now, we can get the same estimation as (\ref{app:RI2}).
		
		From Definition \ref{def:ARI}, we may assume that the sequence $\big\{S_\phi f(p_n)\big\}_{n\in \NN}$ never contains 0. By taking subsequence if necessary, we assume $\big\{S_\phi f(p_n)\big\}_{n\in \NN}$ is a negative sequence with respect to the fact that $S_\phi f(q)>0$. Otherwise, we just need to consider that the positive sequence $\big\{S_\phi f(p_n)\big\}_{n\in \NN}$ is asymptotically rationally independent of $S_\phi f(p)$.
		
		Let $\frac{S_\phi f(p_n)}{S_\phi f(q)}=\frac{l_n}{k_n}$ with $-l_n,k_n>0$, and
		${\it gcd}(l_n,k_n)=1$. From the item 1 of Lemma $\ref{lem:ARI}$, we have
		$$
		\lim_{n\to\infty} \inf_{k,l\in\NN}
		\big\{ k\cdot S_\phi f(p_n)+l\cdot S_\phi f(q) ~|
		~ k\cdot S_\phi f(p_n)+l\cdot S_\phi f(q)>0\big\}
		=\lim_{n\to\infty}\frac{S_\phi f(q)}{k_n}=0.
		$$
		Therefore, for above $\e>0$, there exists $N>0$ such that $n\geq N$ implies
		$$
		\inf_{k,l\in\NN}
		\big\{ k\cdot S_\phi f(p_n)+l\cdot S_\phi f(q) ~|
		~ k\cdot S_\phi f(p_n)+l\cdot S_\phi f(q)>0\big\}
		=\frac{S_\phi f(q)}{k_n}<\frac{\e}{18}.
		$$
		
		For the choice of $n$ (hence $p_n$), we take
		\begin{align}
			x\in W^s(p_n)\pitchfork W^u(q) \quad {\rm and} \quad y\in W^s(q)\pitchfork W^u(p_n),\nonumber
		\end{align}
		and denote by $K_{p_n}=\tilde{K_1}+\tilde{K_2}+\tilde{K_3}+\tilde{K_4}$, where
		\begin{align}
			&\tilde{K_1} = \sum_{j=1}^{\pi(p_n)}\sum_{i=0}^{+\infty}
			\left({\phi(f^{-i\cdot\pi(p_n)-j}(y))- \phi(f^{-j}(p_n))}\right), \nonumber\\
			&\tilde{K_2} = \sum_{j=0}^{\pi(q)-1}\sum_{i=0}^{+\infty}
			\left({\phi(f^{i\cdot\pi(q)+j}(y))-\phi(f^{j}(q))}\right),\nonumber\\
			&\tilde{K_3} = \sum_{j=1}^{\pi(q)}\sum_{i=0}^{+\infty}
			\left({\phi(f^{-i\cdot\pi(q)-j}(x))- \phi(f^{-j}(q))}\right), \nonumber\\
			&\tilde{K_4} = \sum_{j=0}^{\pi(p_n)-1}\sum_{i=0}^{+\infty}
			\left({\phi(f^{i\cdot\pi(p_n)+j}(x))- \phi(f^{j}(p_n))}\right).\nonumber
		\end{align}
		Here $K_{p_n}$ is closely related to the above choice of $p_n$ (and $q,x,y$ naturally).
		
		Note that ${\it gcd}(l_n,k_n)=1$ with $-l_n,k_n>0$, there exist $m_0,n_0\in\NN$ large enough such that
		$$
		m_0\cdot l_n + n_0\cdot k_n=\left\{\begin{array}{ll} 1, & \mathrm{if}\ K_0-K_{p_n}\ge 0\\
			-1, &\mathrm{if}\ K_0-K_{p_n}<0\end{array},\right.
		$$
		This implies
		\begin{align}
			2m_0\cdot S_\phi f(p_n)+2n_0\cdot S_\phi f(q) = 2\frac{S_\phi f(q)}{k_n} (m_0\cdot l_n + n_0\cdot k_n) \in \left(-\frac{\e}{9},\frac{\e}{9}\right).
		\end{align}
		By Archimedean Property, there exists $h\in\NN$ ($h=1$ when $K_0-K_{p_n}$ coincidentally equals to $0$) such that
		\begin{align}\label{app:ARI-1}
			h\cdot\left[ 2m_0\cdot S_\phi f(p_n)+2n_0\cdot S_\phi f(q) \right] \in \left(K_0-K_{p_n}-\frac{\e}{9}, ~K_0-K_{p_n}+\frac{\e}{9} \right).
		\end{align}
		This corresponds to the estimation (\ref{app:RI-1}).
		
		Actually, for any $i_0\in\NN$, there exist $m_{i_0},n_{i_0}\ge i_0$ satisfying (\ref{app:ARI-1}). In fact, for $i_0\in\NN$, just take the sequence of integers
		$$
		m_{i_0}=m_0+k_n\cdot i_0 \qquad {\rm and} \qquad
		n_{i_0}=n_0-l_n\cdot i_0,
		$$
		we also have the same estimation:
		\begin{align}\label{app:ARI-2}
			h\cdot\left[ 2m_{i_0}\cdot S_\phi f(p_n)+2n_{i_0}\cdot S_\phi f(q) \right]
			\in \left(K_0-K_{p_n}-\frac{\e}{9}, ~K_0-K_{p_n}+\frac{\e}{9} \right).
		\end{align}
		This corresponds to the desired estimation (\ref{app:RI-2}), or  (\ref{app:RI2}).
		
		Moreover, by the inequality (\ref{app:ARI-2}) divided by $n_{i_0}$, we have
		$$
		\dfrac{m_{i_0}}{n_{i_0}}\to -\dfrac{S_\phi f(q)}{S_\phi f(p_n)} \quad {\rm as} \quad i_0\to+\infty.
		$$
		Thus for $i_0$ large enough, we take
		$$
		L_1=L_4=h\cdot m_{i_0}\cdot\pi(p_n) \quad {\rm and} \quad
		L_2=L_3=h\cdot n_{i_0}\cdot\pi(q),
		$$
		then we can also get $\alpha\in(0,1)$ satisfying $L_0\ge \alpha L_i$, for $i=1,2,3,4$.
		
		Furthermore, with the above choice of $x$ and $y$, we can consider the periodic pseudo-orbit:
		$$
		\tilde{Q}=\left\{f^{-L_1}(y), \cdots, y,\cdots, f^{L_2-1}(y),
		f^{-L_3}(x), \cdots,x,  \cdots, f^{L_4-1}(x)\right\}.
		$$
		And for large enough $i_0$, these $L_i$'s naturally satisfy in advance all those similar estimations which are contributed to the estimation $(\ref{cal:pt-errors})$, or (\ref{cal:pseudo-errors2}) plus (\ref{cal:true-errors2}).
		
		Hence combining these discussions, the same argument of Lemma \ref{Thm-1} shows that if we choose $i_0$ large enough and consider the shadowing periodic orbit $z$, then we will have $S_\phi f(z)\in(K_0-\e,K_0+\e)$. This ends the proof of Lemma \ref{Thm-2}.
	\end{proof}
	
	\begin{remark}
		The lemma also holds if $p,q$ and $\{p_n\}_{n\in\NN}$ are contained in a transitive hyperbolic set.
	\end{remark}
	
	\begin{lemma}\label{Thm-3}
		Let $f$ and $\phi$ be as in Theorem \ref{main}. If for any $z\in \mathrm{Per}(f)$, $S_\phi f(z)$ is rationally dependent of $S_\phi f(q)$, then there does exist a sequence $\{p_n\}_{n\in\NN}$ of periodic points such that the corresponding sequence $\big\{S_\phi f(p_n)\big\}_{n\in\NN}$ is asymptotically rationally independent of $S_\phi f(q)$.
	\end{lemma}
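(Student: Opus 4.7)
The plan is to argue by contradiction, showing that if no sequence of periodic points $\{p_n\}_{n\in\NN}$ with $\{S_\phi f(p_n)\}$ asymptotically rationally independent of $S_\phi f(q)$ exists, then the two-sided sign condition $S_\phi f(p)<0<S_\phi f(q)$ cannot hold. Enumerate the countable set $\mathrm{Per}(f)$ as $\{z_n\}_{n\in\NN}$. The hypothesis of the lemma plus the contrary assumption mean that no subsequence of $\big\{S_\phi f(z_n)\big\}$ is asymptotically rationally independent of $S_\phi f(q)$, so item 2 of Lemma \ref{lem:ARI} produces a constant $c>0$ with $S_\phi f(z)\in c\ZZ$ for every $z\in\mathrm{Per}(f)$. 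I would then define the $\theta$-Hölder observation $\bar\phi:M\to\SS^1=\RR/\ZZ$ by $\bar\phi(x):=\phi(x)/c\pmod{1}$. Since $f$ is transitive by Theorem \ref{THM:A on N} and $S_{\bar\phi}f(z)=0$ in $\SS^1$ for every periodic $z$, the $\SS^1$-version of Liv\v{s}ic's theorem (Theorem \ref{THM:Liv}) yields a Hölder continuous $\psi:M\to\SS^1$ with $\bar\phi=\psi\circ f-\psi$ in $\SS^1$.

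The heart of the argument, and the step I expect to be the main obstacle, is promoting this $\SS^1$-coboundary to a genuine real coboundary; for this it suffices to show the induced degree homomorphism $\psi_*:\pi_1(M)=\Gamma\to\pi_1(\SS^1)=\ZZ$ is trivial. The plan is to lift $\psi$ to $\tilde\psi:G\to\RR$ on the universal cover $G$, and choose a lift $\hat f:G\to G$ of $f$ satisfying $\hat f(\gamma x)=f_*(\gamma)\hat f(x)$ for every $\gamma\in\Gamma$. Because $\phi/c$ is a $\Gamma$-invariant real lift of $\bar\phi$ and $G$ is connected, $\tilde\psi\circ\hat f-\tilde\psi-\phi/c$ must be a constant integer. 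Comparing values at $x$ and at $\gamma x$ via $\tilde\psi(\gamma y)-\tilde\psi(y)=\psi_*(\gamma)$ and subtracting forces
\[\psi_*(f_*(\gamma))=\psi_*(\gamma)\qquad\text{for every }\gamma\in\Gamma,\]
i.e.\ $\psi_*$ is an $f_*$-fixed element of $\mathrm{Hom}(\Gamma,\ZZ)$. By Theorem \ref{THM:A on N}, $f_*$ extends to a hyperbolic nilmanifold automorphism, so the derivative $\Phi$ acts on $\mathfrak{g}$ without eigenvalue of modulus $1$; consequently the induced action on the abelianization $\mathfrak{g}/[\mathfrak{g},\mathfrak{g}]$, and hence on $\Gamma^{ab}\otimes\RR$, has no eigenvalue $1$. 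Dually, the $f_*$-invariant subspace of $\mathrm{Hom}(\Gamma^{ab},\ZZ)\otimes\RR=\mathrm{Hom}(\Gamma,\ZZ)\otimes\RR$ is zero, so $\psi_*=0$ and $\psi$ lifts to a Hölder continuous $\tilde\psi:M\to\RR$.

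With this real lift in hand, $\phi/c-(\tilde\psi\circ f-\tilde\psi)$ is a continuous $\ZZ$-valued function on the connected manifold $M$, hence equals some constant $n\in\ZZ$. Then $S_\phi f(z)=cn\,\pi(z)$ for every periodic point $z$; in particular $S_\phi f(p)<0$ forces $n<0$, while $S_\phi f(q)>0$ forces $n>0$, which is absurd. The desired sequence $\{p_n\}_{n\in\NN}$ must therefore exist. The delicate point in executing this plan is the identification of the $f_*$-eigenvalues on $\Gamma^{ab}\otimes\RR$ with those of $\Phi$ on $\mathfrak{g}/[\mathfrak{g},\mathfrak{g}]$, which is where the nilmanifold hypothesis (rather than just Anosov) is genuinely used; everything else is a combination of Liv\v{s}ic, covering space theory, and linear algebra.
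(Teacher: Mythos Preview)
Your proposal is correct and follows essentially the same route as the paper: contradiction via item~2 of Lemma~\ref{lem:ARI} to get $c>0$ with all Birkhoff sums in $c\ZZ$, projection to an $\SS^1$-valued observation, Liv\v{s}ic to obtain an $\SS^1$-coboundary, triviality of the induced map on $\pi_1$ via hyperbolicity of $f_*$, a real lift, and the final sign contradiction $S_\phi f(p)=cn\pi(p)<0<cn\pi(q)=S_\phi f(q)$.

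The only substantive difference is in the step you flag as delicate. To pass from $\psi_*\circ f_*=\psi_*$ to $\psi_*=0$, you factor $\psi_*$ through $\Gamma^{ab}$ and invoke the identification $\Gamma^{ab}\otimes\RR\cong\mathfrak{g}/[\mathfrak{g},\mathfrak{g}]$ (which is correct for a lattice in a simply connected nilpotent Lie group, but does require a short justification). The paper instead appeals to \cite[Theorem~2.11]{Ra72} to extend the homomorphism $\psi_*:\Gamma\to\ZZ\subset\RR$ uniquely to a Lie group homomorphism $\psi_*:G\to\RR$, then simply differentiates the equation $\psi_*\circ f_*=\psi_*$ at the identity and uses that $D_ef_*$ has no eigenvalue~$1$ on all of $\mathfrak{g}$. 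This sidesteps the abelianization bookkeeping entirely; your route is more self-contained but requires the extra identification you noted.
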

	
	\begin{proof}[Proof of Lemma \ref{Thm-3}]
		It will follow from the argument by contradiction.
		Supposing on the contrary, from the item 2 of Lemma $\ref{lem:ARI}$, we know that there exists $c>0$ such that
		$$
		\big\{S_\phi f(z)~|~z\in \mathrm{Per}(f)\big\} \subset c\ZZ=
		\big\{c\cdot m~|~m\in\ZZ\big\}.
		$$
		
		We denote by $\Phi:M \rightarrow \SS^1=\RR/{c\ZZ}$, the projection of $\phi$, which is defined as
		$$\Phi(z)=\phi(z)\mod c,$$
		then $\Phi$ has trivial observations along periodic orbits, namely, $S_\Phi f(z)=\bar{0}$ for any $z \in \mathrm{Per}(f)$.
		Moreover, according to Theorem \ref{THM:Liv}, we have a function $\Psi:M \rightarrow \SS^1$ such that
		\begin{align}\label{coh}
			\Phi=\Psi\circ f-\Psi.
		\end{align}
		
		We will lift the equation (\ref{coh}) from $\SS^1$ to $\mathbb{R}$, since we have the following claim.
		
		\begin{claim}\label{lift}
			The action $\Psi_*:\pi_1(M) \rightarrow \pi_1(\SS^1)$ induced by $\Psi:M\rightarrow \SS^1$ is trivial.
		\end{claim}
		
		\begin{proof}[Proof of Claim \ref{lift}]
			From the equation (\ref{coh}), by thinking the action on the fundamental group, we have
			\begin{equation}\label{Hom-equ}
				\Phi_*=\Psi_*\circ f_*-\Psi_*.
			\end{equation}
			It can be easily seen that $\Phi_*:\pi_1(M) \rightarrow \pi_1(\SS^1)$ is trivial. In fact, we know $\phi_*:\pi_1(M) \rightarrow \pi_1(\RR)$ and $\RR$ is contractible, i.e., $\pi_1(\RR)=0$, thus $\phi_*\equiv 0_*$ leads to the projection $\Phi_*\equiv 0_*$. Then we have
			\begin{equation}\label{Hom-equ2}
				\Psi_*\circ f_*=\Psi_*.
			\end{equation}
			
			Note that $f$ is Anosov on the nilmanifold $M=G/\Gamma$, the induced action $f_*:\pi_1(M)=\Gamma\circlearrowleft$ is hyperbolic by Theorem \ref{THM:A on N}. More precisely, the automorphism $f_*$ on $\Gamma$ can be uniquely extended to an automorphism on $G$, which is also denoted by $f_*$. Considering the induced map $D_ef_*:T_eG\rightarrow T_eG$, where $e$ is the identity element of the Lie group $G$, we know the linear map $D_ef_*$ has no eigenvalues with modulus 1.
			
			Since $\Psi_*:\Gamma \rightarrow \pi_1(\SS^1)=\ZZ\subseteq\RR$, by Theorem 2.11 of \cite{Ra72}, we know the homomorphism $\Psi_*$ can be uniquely extended as $\Psi_*:G \rightarrow\RR$. Therefore, the induced homology equation $(\ref{Hom-equ})$ also holds on the Lie group $G$. Moreover, from the equation $(\ref{Hom-equ2})$, we have
			$$ D_e\Psi_*\cdot D_ef_*=D_e\Psi_*.$$
			This implies $D_e\Psi_*\equiv0_*$ based on the hyperbolicity of $D_ef_*$, hence $\Psi_*\equiv 0_*$.
		\end{proof}
		
		Now we lift $\Psi:M \rightarrow \SS^1=\RR/{c\ZZ}$ to a real-valued function $\psi:M \rightarrow \RR$, and corresponding to the equation (\ref{coh}), we get $m\in\ZZ$ such that
		$$\phi=\psi\circ f-\psi+mc.$$
		Consequently, we have
		$$
		S_\phi f(p)=mc\pi(p),\qquad {\rm and} \qquad
		S_\phi f(q)=mc\pi(q),
		$$
		which contradicts the given condition $S_\phi f(p)<0<S_\phi f(q)$.
	\end{proof}
	
	All in all, these three lemmas complete the whole proof of Theorem \ref{main}.
\end{proof}


\section*{Acknowledgments}
We would like to thank Jinpeng An for the valuable discussion 
and pointing out the reference \cite{Ra72} about the theory of Lie group.
S. Gan is supported by NSFC 11771025 and 11831001.
Y. Shi is supported by NSFC 12071007 and 11831001.


\bibliography{Bib-XIA2020}
\bibliographystyle{amsalpha}

\end{document}